\theoremstyle{plain}
\newcommand{\indep}{\rotatebox[origin=c]{90}{$\models$}}
\DeclarePairedDelimiterX{\ExpArg}[1]{[}{]}{#1}
\theoremstyle{plain}
\newtheorem{theorem}{Theorem}[section]
\newtheorem{lemma}[theorem]{Lemma}
\newtheorem{proposition}[theorem]{Proposition}
\theoremstyle{definition}
\newtheorem{definition}{Definition}[section]
\theoremstyle{remark}
\definecolor{lightgray}{gray}{0.95}
\newcommand{\NormS}[1]{\mbox{}\left\|#1\right\|^2}
\newcommand{\setlinespacing}[1]%
           {\setlength{\baselineskip}{#1 \defbaselineskip}}
\newcommand{\BlockDiagk}[1]{\mbox{}\left(%
\begin{array}{cc}
  \Sigma_{k} & \bf{0} \\
  \bf{0} &  \Sigma_{\rho-k}\\
\end{array}\right)}
\newcommand{\BlockDiagkk}[1]{\mbox{}\left(%
\begin{array}{cc}
  \Sigma_{k} & \bf{0} \\
  \bf{0} & \bf{0} \\
\end{array}\right)}
\newcommand{\BlockDiagkrk}[1]{\mbox{}\left(%
\begin{array}{cc}
  \bf{0} & \bf{0} \\
  \bf{0} & \Sigma_{\rho-k} \\
\end{array}\right)}
\newcommand{\BlockDiagkkh}[1]{\mbox{}\left(%
\begin{array}{c}
  \Sigma_{k} \\
  \bf{0} \\
\end{array}\right)}
\newcommand{\BlockDiagkrkh}[1]{\mbox{}\left(%
\begin{array}{c}
  \bf{0} \\
  \Sigma_{\rho-k} \\
\end{array}\right)}
\newenvironment{Proof}{\noindent {\em Proof:}}
\long\def\killtext#1{}
\icmltitlerunning{Parallel Quasi-Concave Set Optimization: A new frontier that does not need submodularity}
\begin{document}

\twocolumn[
\icmltitle{Parallel Quasi-concave set optimization:\\ A new frontier that scales without needing submodularity}




\begin{icmlauthorlist}
\icmlauthor{Praneeth Vepakomma}{to}
\icmlauthor{Yulia Kempner}{goo}
\icmlauthor{Ramesh Raskar}{to}
\end{icmlauthorlist}

\icmlaffiliation{to}{Massachusetts Institute of Technology}
\icmlaffiliation{goo}{Holon Institute of Technology}


\vskip 0.3in
]




\begin{abstract}
Classes of set functions along with a choice of ground set are a bedrock to determine and develop corresponding variants of greedy algorithms to  obtain efficient solutions for combinatorial optimization problems. The class of approximate constrained submodular optimization has seen huge advances at the intersection of good computational efficiency, versatility and approximation guarantees while exact solutions for unconstrained submodular optimization are NP-hard. What is an alternative to situations when submodularity does not hold? Can efficient and globally exact solutions be obtained? We introduce one such new frontier: The class of quasi-concave set functions induced as a dual class to monotone linkage functions. We provide a parallel  algorithm with a time complexity over $n$ processors of $\mathcal{O}(n^2g) +\mathcal{O}(\log{\log{n}})$ where $n$ is the cardinality of the ground set and $g$ is the complexity to compute the monotone linkage function that induces a corresponding quasi-concave set function via a duality. The complexity reduces to  $\mathcal{O}(gn\log(n))$ on $n^2$ processors and to $\mathcal{O}(gn)$ on $n^3$ processors. Our algorithm provides a globally optimal solution to a maxi-min problem as opposed to submodular optimization which is approximate. We show a potential for widespread applications via an example of diverse feature subset selection with exact global maxi-min guarantees upon showing that a statistical dependency measure called distance correlation can be used to induce a quasi-concave set function.
\end{abstract}
\section{Introduction}
The rich structure of some set function classes allows for development of efficient algorithms for combinatorial optimization problem.
 To be formal, a set system $(F, \mathcal{Z})$ is a collection $F$ of subsets of a ground set $\mathcal{Z}$. For example $F$ could be subsets of the power set of $\mathcal{Z}$ or could be subsets that satisfy the structure of a greedoid \cite{korte2012greedoids}, semi-lattice \cite{chajda2007semilattice}, independence systems\cite{conforti1989geometric} or an antimatroid\cite{dietrich1989matroids,kempner2003correspondence,algaba2004cooperative} and so forth. \par Popular set function classes such as submodular functions \cite{lovasz1983submodular,edmonds2003submodular,nemhauser1978analysis,fujishige2005submodular,feige2011maximizing,krause2014submodular,iyer2013submodular} have resulted in a wide array of powerful algorithms for several tasks across different fields. \par Under lack of submodularity, relaxations that characterize approximate submodularity, \cite{bian2017guarantees,bogunovic2018robust,horel2016maximization,chierichetti2020additive,das2018approximate} have been introduced to develop  combinatorial algorithms with approximation guarantees.  Other set function classes beyond submodularity include those of subadditive functions, quasi-submodular functions and the lesser known class of induced quasi-concave set functions that is relevant to this paper. \par This paper introduces a parallel algorithm for optimizing quasi-concave set functions with global optimality guarantees as opposed to submodular optimization that provides approximate solutions. Algorithms for optimizing general quasi-concave set functions do not exist, while a specific sub class of quasi-concave set functions that can be written in terms of monotone linkage functions can be optimized to obtain globally optimal solutions. As an example, we show that certain monotone linkage functions of distance covariance  induce a corresponding quasi-concave set function. We use our algorithm to find an optimally diverse set of features based on distance covariance. 
\subsection{Preliminaries}
We now list the definition of \textit{quasi-concave set functions} and state the \textit{induced quasi-concave set function optimization problem} which are central to the focus of this paper.

\section{Quasi-concave set functions}
\begin{definition}[\textbf{Quasi-Concave Set Function} \cite{mullat1976extremal,kuznecov1985monotonic,zaks1989incomplete,vepakomma2019diverse}]\label{qcvxDef}
A function $F : \mathcal{F} \mapsto \mathbb{R}$ defined on a set system $(\mathbf{X}, \mathcal{F})$ is quasi-concave
if for each $\mathbf{S, T}\in \mathcal{F}$, \begin{equation}\label{mon_func}
F(\mathbf{S} \cap \mathbf{T} ) \geq \min{\{F(\mathbf{S}), F(\mathbf{T})\}}\end{equation}
\end{definition}

\textbf{Connection:} We would like to note its notational similarity to its continuous counter-part of strictly quasi-concave functions which are those real-valued functions defined on any convex subset of real-valued vector spaces such that
${\displaystyle f(\lambda x+(1-\lambda )y)\geq \min {\big \{}f(x),f(y){\big \}}}$ for all ${\displaystyle x\neq y}$ and ${\displaystyle \lambda \in (0,1)}$.

We denote the set $2^\mathbf{X} \setminus \left\{\phi,\mathbf{X}\right\}$  by $\mathcal{P}^{-X}$ and we use $i$ indexed subsets like $S_i$ to indicate a singleton (unit cardinality) element of $\mathbf{S}$ labeled by $i$. 

\begin{definition}[\textbf{Monotone Linkage Function} \cite{mullat1976extremal}]

A function $\pi(X_i,\mathbf{Z})$ defined on $\mathbf{Z} \in \mathcal{P}^{-X}  , X_i \in \mathbf{X} \setminus \mathbf{Z}$ is called a monotone linkage function if \begin{equation}\pi(X_i, \mathbf{S}) \geq \pi(X_i, \mathbf{T}), \mathbf{S} \subseteq \mathbf{T}\in \mathcal{F},  \forall X_i \in \mathbf{X} \setminus T\end{equation}
\end{definition}

We would like to note for the clarity of the reader that $X_i$ is an element while $\mathbf{S},\mathbf{T}$ are sets. Therefore, to make this distinction clear we denote sets in bold-faced font and elements otherwise.

Monotone linkage functions have been introduced and used for clustering in \cite{kempner1997monotone,kempner2003clustering}.  A recent work \cite{seiffarth2021maximum} uses these functions to find maximum margin separations in finite closure systems.

\textbf{Induced quasi-concave set function optimization} This is stated as the problem of maximizing a quasi-concave set function  $M_{\pi}(\mathbf{T})$ over the modified power set  $\mathcal{P}^{-X}$: 
\begin{equation}\label{minFeqn10}
   \underset{\mathbf{T} \subset \mathbf{\mathcal{P}^{-X}}} {\mathrm{arg\enskip max }} \enskip M_{\pi}(\mathbf{T}) = \underset{\mathbf{T} \subset \mathbf{\mathcal{P}^{-X}}} {\mathrm{arg\enskip max }} \enskip \underset{X_i \in \mathbf{X}\setminus \mathbf{T}}{\text{min}}
  \pi(X_i,\mathbf{T})
\end{equation}
where $\pi(X_i,\mathbf{Z})$ is a monotone linkage function. 

\section{Contributions}
\begin{enumerate}
    \item We provide a parallel algorithm to find all the subsets that globally optimize the induced quasi-concave set function optimization problem in (\ref{minFeqn10}).

\begin{table*}[]
\begin{adjustbox}{width=\linewidth,center}
\begin{tabular}{|c|c|c|c|c|c|c|c|c|}
\hline
Type       & \begin{tabular}[c]{@{}c@{}}Induced Quasi-concave\\  set function\\ (Parallel: \textbf{Ours})\end{tabular} & \begin{tabular}[c]{@{}c@{}}Induced Quasi-concave\\ set function\end{tabular}         & \begin{tabular}[c]{@{}c@{}}Quasi-concave\\ set function \\ (General purpose)\end{tabular} & \begin{tabular}[c]{@{}c@{}}Unconstrained\\ Submodular\end{tabular} & \begin{tabular}[c]{@{}c@{}}Robust \\ submodular\end{tabular} & \begin{tabular}[c]{@{}c@{}}Unconstrained \\ Quasi \\ submodular\end{tabular} & \begin{tabular}[c]{@{}c@{}}Quasi semistrictly submodular\\ M-/L-convex\end{tabular} & \begin{tabular}[c]{@{}c@{}}$SSQM^{\neq}$\\ under M-convex \\ domain\end{tabular} \\ \hline
Complexity & \begin{tabular}[c]{@{}c@{}}On $n$ processors,\\ $\mathcal{O}(n^2g) +\mathcal{O}(\log{\log{n}})$. \\For $n^2, n^3$ processors, \\check Table 2. \end{tabular}       & \begin{tabular}[c]{@{}c@{}}$\mathcal{O}(n^3g)  +\mathcal{O}(n)$\end{tabular} & Unknown                                                                                    & NP-Hard                                                            & $\mathcal{O}(nk)$                                                      & $\mathcal{O}(n^2)$                                                                     & \begin{tabular}[c]{@{}c@{}}$\mathcal{O}(n^2\log{L})  + \mathcal{O}(n^2)$\end{tabular}                 & \begin{tabular}[c]{@{}c@{}}$\mathcal{O}(n^4(\log{L})^2)$\\ \end{tabular}                    \\ \hline
Solution   & \begin{tabular}[c]{@{}c@{}}Globally\\ optimal\end{tabular}                                 & \begin{tabular}[c]{@{}c@{}}Globally\\ optimal\end{tabular}                           & Unknown                                                                                    & Unknown                                                            & Approximate                                                  & Approximate                                                                  & Approximate                                                                         & Approximate                                                                       \\ \hline
\end{tabular}

\end{adjustbox}
\caption{We show the computational complexity of our parallel algorithm and contrast it with that of its non-parallel version (cubic complexity), settings of submodular optimization and its relaxations. $n$ is the size of the ground set, $k$ is the cardinality of the returned set = $\max\left\{|x(v) - y(v)||x,y \in dom\; f, v\in V\right\}$ where $f:Z^V \mapsto \mathbb{R} \cup \left\{+\infty\right\}$ and $g$ is the complexity to compute the monotone linkage function.}
\end{table*}

\item The proposed parallel algorithm has a time complexity over $n$ processors of $\mathcal{O}(n^2g) +\mathcal{O}(\log{\log{n}})$ where $n$ is the cardinality of the ground set and $g$ is the complexity to compute the monotone linkage function that induces a corresponding quasi-concave set function via a duality. The complexity reduces to  $\mathcal{O}(gn\log(n))$ on $n^2$ processors and to $\mathcal{O}(gn)$ on $n^3$ processors. The parallel approach reduces the currently existing cubic computational complexity of the non parallel version which is $\mathcal{O}(n^3g) + \mathcal{O}(n)$.

\item As an example, we show that some functions of distance covariance (a measure of statistical dependence) are quasi-concave set functions. This lets us optimize them to obtain globally optimal maxi-min solutions for the most diverse subset of features. 


\end{enumerate}

    \subsection{Quasi-concave set function optimization under various set systems}
A greedy-type algorithm for finding maximizers of induced quasi-concave set functions was constructed in \cite{mullat1976extremal,kuznecov1985monotonic,zaks1989incomplete}. Inspired by this work, extensions of these algorithms were developed for the setting of multipartite graphs in \cite{vashist2006multipartite}. Similarly, quasi-concave set functions of distance covariance were derived in \cite{vepakomma2019diverse} and their optimization resulted in a solution for a diverse feature selection problem with guarantees. 
Furthermore, quasi-concave set functions were extended to various set systems including antimatroids \citep{levit2004quasi} and meet-semilattices  in \cite{kempner2008quasi}.

\begin{table}[!htbp]
\centering
\begin{tabular}{|l|l|}
\hline
\multicolumn{1}{|c|}{\textbf{\begin{tabular}[c]{@{}c@{}}\# of \\ processors\end{tabular}}} & \multicolumn{1}{c|}{\textbf{\begin{tabular}[c]{@{}c@{}}Time \\ Complexity\end{tabular}}} \\ \hline
$n$  \;\;(Ours)                                                                                        & $\mathcal{O}(n^2g )$                                                                    \\ \hline
$n^2$ (Ours)                                                                                      & $\mathcal{O}(gn\log{n})$                                                                 \\ \hline
$n^3$ (Ours)                                                                               & $\mathcal{O}(gn)$                                                                        \\ \hline
Non-parallel                                                                               & $\mathcal{O}(n^{3}g)+\mathcal{O}(n)$                                         \\ \hline
\end{tabular}
\caption{In this table, we show the complexity of our proposed parallel algorithm with respect to increasing number of processors $n, n^2 \& n^3$. Here, $n$ is also chosen to be around the order of size of the ground set. We show that the running times can be drastically reduced from the cubic complexities in the non-parallel version.}
\end{table}

\section{Related work: Comparing quasi-concave set functions with submodularity} Given the seminal impact of submodular optimization, we would like to compare the definitions of quasi-concave set functions with submodular functions and their relaxations. We state some connections inline that we find accordingly.  
\begin{enumerate}
    
    \item \textbf{Submodular optimization} \cite{fujishige2005submodular}
    Let $\textbf{V}$ be a ground set with cardinality $|\textbf{V}|=n$, and let $f$ :
$2^{\textbf{V}} \rightarrow \mathbb{R}_{\geq 0}$ be a set function defined on $\textbf{V} .$ The function $f$ is said to be submodular if for any sets $\textbf{X} \subseteq \textbf{Y} \subseteq \textbf{V}$ and any element $e \in V \backslash Y$, it holds that the discrete derivative
$$f(\textbf{X} \cup\{e\})-f(\textbf{X}) \geq f(\textbf{Y} \cup\{e\})-f(\textbf{Y})$$  is non-increasing in  $\textbf{X}$. That is, the incremental gain of adding an element to a subset is $\geq$ (is not smaller) the incremental gain of adding it to a superset.  An equivalent definition is that for every ${\displaystyle \textbf{S,T}\subseteq \textbf{V} }$ we have that \begin{equation}
    {\displaystyle f(\textbf{S})+f(\textbf{T})\geq f(\textbf{S}\cup \textbf{T})+f(\textbf{S}\cap \textbf{T})}
\end{equation}
The problem of maximizing a normalized monotone submodular function subject to a cardinality constraint has been studied extensively. A celebrated result
of (Nemhauser et al., 1978) shows that a simple greedy algorithm that starts with an empty set and then iteratively adds elements with highest marginal gains provides a $(1 - 1/e)$-approximation.
\\\textbf{Connection:}
Upon defining a linkage function to be equal to a discrete  derivative of a submodular function as $$\pi(e,\textbf{X})=f(\textbf{X} \cup\{e\})-f(\textbf{X})$$ it can be seen that the derivative of a submodular function is a monotone linkage function. However, not every monotone linkage function is a derivative of some submodular function \cite{muchnik1987submodularI,muchnik1987submodularII}.  Combining equations (3) and (4), we can say that the functions that are both submodular and quasi-concave set functions would satisfy $f(\mathbf{S}) + f(\mathbf{T}) >= f(\mathbf{S} \cup \mathbf{T}) + f(\mathbf{S} \cap \mathbf{T}) >= f(\mathbf{S} \cup \mathbf{T}) + \min\left\{f(\mathbf{S}), f(\mathbf{T})\right\}$.

\item\textbf{Robust submodular optimization} Robust versions of submodular optimization problem  were introduced in \citep{krause2008robust,mirzasoleiman2017deletion,bogunovic2017robust,kazemi2018scalable,iyer2019unified,avdiukhin2019adversarially,powers2016constrained}. An earlier variant is of the form introduced in \cite{krause2008robust} as  $$
\max _{\textbf{S} \subseteq \textbf{V},|\textbf{S}| \leq k} \min _{\textbf{Z} \subseteq \textbf{S},|\textbf{Z}| \leq \tau} f(\textbf{S} \backslash \textbf{Z})
$$
The $\tau$ refers to a robustness parameter, representing the size of the subset $\textbf{Z}$ that is removed from the selected set $\textbf{S}$. The goal is to find a set $\textbf{S}$ such that it is robust upon the worst possible removal of $\tau$ elements, i.e., after the removal, the objective value should remain as large as possible. For $\tau=0$, the problem reduces to standard submodular optimization. The greedy algorithm, which is near-optimal for standard submodular optimization can perform arbitrarily badly for the robust version of the problem.
\\\textbf{Connection:} Note that our statement of induced quasi-concave set function optimization problem naturally has a robustness component that is similar to the max-min constraints used in the literature on robust submodular optimization. 
\item\textbf{Quasi submodular and semi-strictly submodular functions }\cite{mei2015unconstrained} A set function $F: 2^{N} \mapsto \mathbb{R}$ is quasi-submodular function if $\forall \textbf{X, Y} \subseteq \textbf{N}$, \textit{both} of the following conditions are satisfied
$$
\begin{array}{l}
F(\textbf{X} \cap \textbf{Y}) \geq F(\textbf{X}) \Rightarrow F(\textbf{Y}) \geq F(\textbf{X} \cup \textbf{Y}) \\
F(\textbf{X} \cap \textbf{Y})>F(\textbf{X}) \Rightarrow F(\textbf{Y})>F(\textbf{X} \cup \textbf{Y})
\end{array}
$$
On a similar note, a rich family of semistrictly submodular, discrete Quasi L-convex and discrete M-convex functions were introduced in \cite{murota1998discrete,murota2009recent}.

\end{enumerate}

\begin{figure}
    \centering
    \includegraphics[scale=0.7]{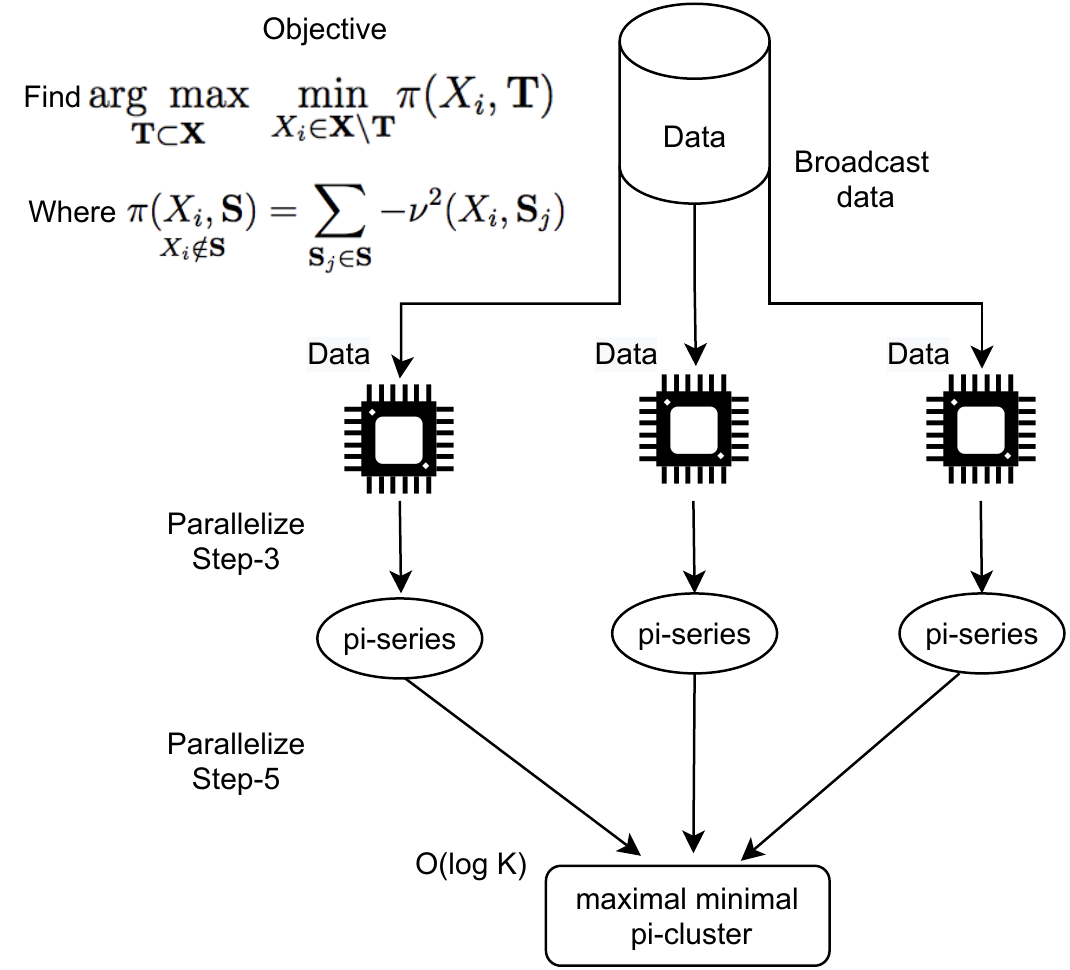}
    \caption{The proposed parallel algorithm consists of generating a $\pi$-series at each parallel entity over a copy of the data. The $\pi$-series at each entity starts with a different $X_i$. Each entity then generates a $\pi$-cluster corresponding to its generated $\pi-series$.The final step involves picking the best $\pi-cluster$. This is the only step that is not done in parallel.}
    \label{fig:my_label1}
\end{figure}
\section{Algorithm and proof of optimality} We now introduce required definitions and corresponding theory to derive the algorithm. This includes definitions for $\pi$-series and $\pi$-clusters
\begin{definition}[\textbf{$ \pi$-series}]
 We refer to a series $s_{\pi}=(X_{i_1},\ldots, X_{i_N})$ as a $\pi$-series if \begin{equation}\pi({X}_{i_{k+1}}, \bf{\overline{S}_k}) = \underset{X_i \in \mathbf{X}\setminus \mathbf{\overline{S}_k}}{\text{min}}
  \pi(\textnormal{X}_i,\mathbf{\overline{S}_k}) \end{equation}
  
  for any starting set $\bf{\overline{S}_k} = \{X_{i_1},\ldots,X_{i_k}\}, k = {1,\ldots,N-1}$.
 \end{definition}
 Therefore, it is a way of greedily populating a series that can start with any first element $\bf{X}_{i_1}$ being the current series, but the subsequent element to be added to the series, must be the element that minimizes the element to current series function of $\pi(\bf{X}_{i_{k+1}},\bf{\overline{S}_k})$ where $\bf{X}_{i_{k+1}}$ is the next element added and $\bf{\overline{S}_k}$ is the current series.
 \begin{definition}[\textbf{$\pi$-cluster}]
A subset $\bf{S}\in \mathcal{P}^{-\mathbf{X}}$ will be
referred to as a $\pi$-cluster if there exists a $\pi$-series, $s_\pi = (X_{i_1},\ldots,X_{i_N})$, such that $\bf{S}$ is a maximizer of $M_{\pi}(\bf{\overline{S}_k})$ over all starting sets $\bf{\overline{S}_k}$ of $s_\pi$.
 \end{definition}

\begin{theorem}\label{Theorem7.1}\cite{kempner1997monotone}
 If for a $\pi$-series $s_{\pi} = (X_{i_1},X_{i_2},\ldots,X_{i_N})$, a subset $\mathbf{S}\subset \mathbf{X}$ contains $X_{i_1}$, and if $X_{i_{k+1}}$ is the first element in $s_{\pi}$ not contained in $\mathbf{S}$ (for some $k \in  \{1,\ldots, N - 1\}$, then $M_{\pi}(\mathbf{\overline{S}_k}) \geq  M_{\pi}(\mathbf{S})$

where $\mathbf{\overline{S}_k} = \left(X_{i_1},\ldots, X_{i_k}\right )$. In particular, if $\mathbf{S}$ is an inclusion-minimal maximizer of $M_{\pi}$ (with regard
to $\mathcal{P}^{-\mathbf{X}})$, then $\mathbf{S} = \mathbf{\overline{S}_k}$, that is, $\mathbf{S}$ is a $\pi$-cluster.
\end{theorem}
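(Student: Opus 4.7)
The plan is to chase the two definitions in the excerpt and squeeze $M_\pi(\mathbf{S})$ between two quantities associated with $X_{i_{k+1}}$. First I would observe that by the definition of a $\pi$-series, the element $X_{i_{k+1}}$ is precisely a minimizer of $\pi(\,\cdot\,, \overline{\mathbf{S}}_k)$ over $\mathbf{X}\setminus \overline{\mathbf{S}}_k$, so
\begin{equation}
M_{\pi}(\overline{\mathbf{S}}_k) \;=\; \pi(X_{i_{k+1}}, \overline{\mathbf{S}}_k).
\end{equation}
Since $X_{i_1},\ldots,X_{i_k}\in \mathbf{S}$ we have the inclusion $\overline{\mathbf{S}}_k \subseteq \mathbf{S}$, and since $X_{i_{k+1}}\notin \mathbf{S}$ the monotone linkage property of $\pi$ applies to give $\pi(X_{i_{k+1}}, \overline{\mathbf{S}}_k) \geq \pi(X_{i_{k+1}}, \mathbf{S})$. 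Finally, because $X_{i_{k+1}}\in \mathbf{X}\setminus \mathbf{S}$, the element $X_{i_{k+1}}$ is a legal candidate in the minimum defining $M_\pi(\mathbf{S})$, so $\pi(X_{i_{k+1}}, \mathbf{S}) \geq M_\pi(\mathbf{S})$. Chaining the three inequalities yields the first claim.

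For the second claim I would argue as follows. The first part gives $\overline{\mathbf{S}}_k \subseteq \mathbf{S}$ together with $M_\pi(\overline{\mathbf{S}}_k)\geq M_\pi(\mathbf{S})$. If $\mathbf{S}$ is a maximizer of $M_\pi$ over $\mathcal{P}^{-\mathbf{X}}$, the reverse inequality is automatic provided $\overline{\mathbf{S}}_k\in\mathcal{P}^{-\mathbf{X}}$, and this needs a short sanity check: $\overline{\mathbf{S}}_k$ is nonempty because it contains $X_{i_1}$, and it is a proper subset of $\mathbf{X}$ because $X_{i_{k+1}}\notin \overline{\mathbf{S}}_k$ (which in turn is forced by $\mathbf{S}\subsetneq \mathbf{X}$ guaranteeing that some element of $s_\pi$ is missing from $\mathbf{S}$, with $k\ge 1$ since $X_{i_1}\in \mathbf{S}$). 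Hence $\overline{\mathbf{S}}_k$ is itself a maximizer; inclusion-minimality of $\mathbf{S}$ then forces $\mathbf{S} = \overline{\mathbf{S}}_k$. Since $\overline{\mathbf{S}}_k$ is a starting set of $s_\pi$ and is a global maximizer of $M_\pi$, it is in particular a maximizer over all starting sets of $s_\pi$, which is exactly the definition of a $\pi$-cluster.

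There is no real analytic obstacle here; the proof is a careful two-line deduction from the two defining inequalities. The only subtlety I anticipate is the bookkeeping in the second part, namely verifying that $\overline{\mathbf{S}}_k$ lies in the admissible family $\mathcal{P}^{-\mathbf{X}}$ (so that it can be compared to $\mathbf{S}$ in the maximization), and being explicit that $1\le k\le N-1$ under the hypotheses, so that $X_{i_{k+1}}$ and $\overline{\mathbf{S}}_k$ are both well-defined.
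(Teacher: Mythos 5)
Your proof is correct. The paper itself does not prove this theorem --- it is imported verbatim from \cite{kempner1997monotone} --- but your argument is exactly the standard one from that source: the chain $M_\pi(\overline{\mathbf{S}}_k)=\pi(X_{i_{k+1}},\overline{\mathbf{S}}_k)\geq\pi(X_{i_{k+1}},\mathbf{S})\geq M_\pi(\mathbf{S})$ using, in order, the $\pi$-series definition, the monotone linkage inequality applied to $\overline{\mathbf{S}}_k\subseteq\mathbf{S}$ with $X_{i_{k+1}}\notin\mathbf{S}$, and the fact that $X_{i_{k+1}}$ is a feasible candidate in the minimum defining $M_\pi(\mathbf{S})$. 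Your sanity checks for the second claim (that $\overline{\mathbf{S}}_k\in\mathcal{P}^{-\mathbf{X}}$ and that $1\leq k\leq N-1$ is forced by $X_{i_1}\in\mathbf{S}\subsetneq\mathbf{X}$) are exactly the right bookkeeping and are handled correctly.
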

  From \cite{kempner1997monotone} we have
\begin{proposition}
If $\bf{S_1},\bf{S_2} \subset \bf{X}$ are overlapping maximizers of a quasi-concave set function $M_\pi(\bf{S})$ over $\mathcal{P}^{-\bf{X}}$, then $\bf{S_1} \cap \bf{S_2}$ is also a maximizer of $M_\pi(\bf{S})$.
\end{proposition}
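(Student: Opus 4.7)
The statement follows almost directly from the definition of a quasi-concave set function (Definition \ref{qcvxDef}), so the plan is essentially to apply that definition to the pair $\mathbf{S_1}, \mathbf{S_2}$ and combine it with the fact that both attain the same (maximal) value. Before doing so, however, the first step is a domain check: one must verify that the candidate set $\mathbf{S_1}\cap \mathbf{S_2}$ actually lies in $\mathcal{P}^{-\mathbf{X}} = 2^{\mathbf{X}} \setminus \{\phi,\mathbf{X}\}$, since $M_\pi$ is only defined there. The hypothesis that $\mathbf{S_1}$ and $\mathbf{S_2}$ are \emph{overlapping} is exactly what rules out the empty intersection, and the fact that $\mathbf{S_1},\mathbf{S_2}\subsetneq\mathbf{X}$ rules out the intersection being all of $\mathbf{X}$, so this admissibility step is immediate.

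Next, I would apply quasi-concavity. Because $M_\pi$ satisfies inequality \eqref{mon_func}, we directly obtain
\begin{equation*}
M_\pi(\mathbf{S_1}\cap\mathbf{S_2}) \;\geq\; \min\bigl\{M_\pi(\mathbf{S_1}),\,M_\pi(\mathbf{S_2})\bigr\}.
\end{equation*}
Let $m^\star = \max_{\mathbf{S}\in\mathcal{P}^{-\mathbf{X}}} M_\pi(\mathbf{S})$ denote the global maximum. By assumption both $\mathbf{S_1}$ and $\mathbf{S_2}$ attain this value, so $M_\pi(\mathbf{S_1}) = M_\pi(\mathbf{S_2}) = m^\star$, and therefore the right-hand side above equals $m^\star$.

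Finally, since $\mathbf{S_1}\cap\mathbf{S_2}\in\mathcal{P}^{-\mathbf{X}}$, the definition of $m^\star$ also gives the reverse inequality $M_\pi(\mathbf{S_1}\cap\mathbf{S_2}) \leq m^\star$. Combining the two inequalities yields $M_\pi(\mathbf{S_1}\cap\mathbf{S_2}) = m^\star$, which is exactly the claim that $\mathbf{S_1}\cap\mathbf{S_2}$ is a maximizer.

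There is no substantive obstacle here; the entire content of the proposition is the bookkeeping step that the intersection lies in the admissible family, after which quasi-concavity and the definition of ``maximizer'' do the rest. The only place where care is needed is to remember that the set system is $\mathcal{P}^{-\mathbf{X}}$ (excluding $\phi$ and $\mathbf{X}$), which is precisely why the hypothesis is ``overlapping'' rather than just ``distinct.''
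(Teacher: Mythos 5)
Your proof is correct. The paper itself gives no proof of this proposition (it is imported from the cited reference \cite{kempner1997monotone}), but your argument --- checking that the overlap hypothesis places $\mathbf{S_1}\cap\mathbf{S_2}$ in $\mathcal{P}^{-\mathbf{X}}$, then sandwiching $M_\pi(\mathbf{S_1}\cap\mathbf{S_2})$ between $\min\{M_\pi(\mathbf{S_1}),M_\pi(\mathbf{S_2})\}=m^\star$ and $m^\star$ --- is the standard one-line derivation from Definition~\ref{qcvxDef} and is exactly what the cited source relies on.
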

This means that the minimal maximizers of a quasi-convex set function are not overlapping. Moreover, any nonminimal maximizer can be uniquely partitioned into a set of the minimal ones.

\begin{theorem}\label{thref1}
Each maximizer of a quasi-concave set function on $\mathcal{P}^{-\bf{X}}$ is a union of its
inclusion-minimal maximizers.
\end{theorem}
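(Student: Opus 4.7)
The plan is to verify both inclusions of
\begin{equation*}
\mathbf{S} \;=\; \bigcup \bigl\{ \mathbf{M} : \mathbf{M} \text{ is an inclusion-minimal maximizer of } M_\pi,\; \mathbf{M} \subseteq \mathbf{S} \bigr\}.
\end{equation*}
The ``$\supseteq$'' direction is immediate since each $\mathbf{M}$ on the right is a subset of $\mathbf{S}$ by construction. The substantive direction is ``$\subseteq$'': for every $X_i \in \mathbf{S}$, I must exhibit an inclusion-minimal maximizer containing $X_i$ that sits inside $\mathbf{S}$.

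To locate such a minimal maximizer I would first use Theorem~\ref{Theorem7.1} to pass to a maximizer inside $\mathbf{S}$ through $X_i$. Build a $\pi$-series $s_\pi$ with first element $X_{i_1} = X_i$; because $\mathbf{S} \in \mathcal{P}^{-\mathbf{X}}$ is a proper subset of $\mathbf{X}$, the series must exit $\mathbf{S}$ at some first index $k+1$. Theorem~\ref{Theorem7.1} then yields $\mathbf{\overline{S}_k} \subseteq \mathbf{S}$ with $X_i \in \mathbf{\overline{S}_k}$ and $M_\pi(\mathbf{\overline{S}_k}) \ge M_\pi(\mathbf{S})$, and since $\mathbf{S}$ is a maximizer the inequality is an equality, so $\mathbf{\overline{S}_k}$ is itself a maximizer. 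Let $\mathcal{H}_{X_i}$ denote the (finite, nonempty) family of all maximizers $\mathbf{T} \subseteq \mathbf{S}$ with $X_i \in \mathbf{T}$.

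Next I would observe, via the Proposition, that $\mathcal{H}_{X_i}$ is closed under intersection: any two members overlap at $X_i$, so their intersection is again a maximizer, still contains $X_i$, and still sits in $\mathbf{S}$. Hence $\mathcal{H}_{X_i}$ has a unique inclusion-smallest element $\mathbf{M}$. I would then argue that $\mathbf{M}$ is in fact inclusion-minimal globally, not only inside $\mathcal{H}_{X_i}$: if some maximizer $\mathbf{T} \subsetneq \mathbf{M}$ existed, minimality of $\mathbf{M}$ in $\mathcal{H}_{X_i}$ would force $X_i \notin \mathbf{T}$; descending from $\mathbf{T}$ to an inclusion-minimal maximizer $\mathbf{M}' \subseteq \mathbf{T}$ gives an inclusion-minimal maximizer $\mathbf{M}' \subsetneq \mathbf{M}$ not containing $X_i$, while the same construction applied inside $\mathbf{M}$ starting from $X_i$ produces an inclusion-minimal maximizer $\mathbf{M}'' \subseteq \mathbf{M}$ containing $X_i$. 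The already-noted pairwise disjointness of inclusion-minimal maximizers (a corollary of the Proposition) then contradicts the coexistence of $\mathbf{M}'$ and $\mathbf{M}''$ inside the single maximizer $\mathbf{M}$.

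The main obstacle is precisely this last step---ensuring that descending to an inclusion-minimal maximizer cannot drop the prescribed element $X_i$. The closure-under-intersection trick for $\mathcal{H}_{X_i}$ is the key device: it guarantees that the smallest maximizer in $\mathbf{S}$ through $X_i$ is well-defined, and when combined with the disjointness corollary of the Proposition, this forbids any proper refinement that still contains $X_i$. Ranging $X_i$ over all of $\mathbf{S}$ then covers $\mathbf{S}$ by inclusion-minimal maximizers sitting in $\mathbf{S}$, and disjointness upgrades the cover to a partition, delivering the theorem.
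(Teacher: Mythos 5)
Your skeleton is the same as the paper's: reduce the theorem to the claim that every $X_i$ in a maximizer $\mathbf{S}$ lies in some inclusion-minimal maximizer contained in $\mathbf{S}$. Your first two steps are sound and add detail the paper omits: Theorem~\ref{Theorem7.1} makes $\mathcal{H}_{X_i}$ nonempty, the Proposition makes it intersection-closed, so it has a unique minimum $\mathbf{M}$. The gap is the last step, where you claim $\mathbf{M}$ is globally inclusion-minimal. You invoke ``the same construction applied inside $\mathbf{M}$'' to produce an inclusion-minimal maximizer $\mathbf{M}''\subseteq\mathbf{M}$ containing $X_i$, but that construction only returns the smallest maximizer through $X_i$, which is $\mathbf{M}$ itself; asserting that this is inclusion-minimal is precisely the claim under proof, so the step is circular. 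And even if $\mathbf{M}'$ and $\mathbf{M}''$ both existed as distinct inclusion-minimal maximizers inside $\mathbf{M}$, their pairwise disjointness yields no contradiction: disjoint minimal maximizers coexisting inside a larger maximizer is exactly the structure the theorem asserts, not something it forbids.

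The gap is not cosmetic. Take $\mathbf{X}=\{a,b,c\}$ with $F(\{b\})=F(\{a,b\})=1$, $F(\{a\})=F(\{a,c\})=0.5$, $F(\{c\})=F(\{b,c\})=0.3$. One checks directly that $F$ is quasi-concave on $\mathcal{P}^{-\mathbf{X}}$ (the only nontrivial instances are $F(\{a\})\geq\min\{F(\{a,b\}),F(\{a,c\})\}$, $F(\{b\})\geq\min\{F(\{a,b\}),F(\{b,c\})\}$, $F(\{c\})\geq\min\{F(\{a,c\}),F(\{b,c\})\}$, all of which hold with equality or better), and that $F=M_\pi$ for the monotone linkage $\pi(x,\mathbf{S})=\max\{F(\mathbf{W}):\mathbf{S}\subseteq\mathbf{W}\subseteq\mathbf{X}\setminus\{x\}\}$. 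Its maximizers are $\{b\}$ and $\{a,b\}$, the only inclusion-minimal maximizer is $\{b\}$, and the element $a$ lies in no inclusion-minimal maximizer; so the smallest maximizer through $a$ is $\{a,b\}$ and is not minimal. In fairness, the paper's own one-line proof defers exactly this point to Theorem~\ref{Theorem7.1}, which only supplies \emph{a} maximizer through $X_i$ inside $\mathbf{S}$, not a minimal one, so the difficulty you hit is genuine and is not resolved by the paper either; closing it requires an additional hypothesis or argument (e.g., a strictness or tie-breaking condition on $\pi$) that rules out configurations like the one above.
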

\begin{proof}
 Indeed, if $\bf{S^\ast}$ is a maximizer of $M_\pi(\bf{S})$ over $\mathcal{P}^{-\bf{X}}$, then, according to Theorem \ref{Theorem7.1}, for
any $X_{i} \in \bf{S^\ast}$, there exists a minimal maximizer included in $\bf{S^\ast}$ and containing $X_{i}$.
\end{proof}

  \begin{algorithm}[H]
   \caption{Algorithm for induced quasi-convex set function optimization}
    \begin{algorithmic}[1]
      \Function{=DiverseMinimalMaximDCoV}{$\mathbf{X}$}
\ForAll{$X_i \in \bf{X}$} \\\enskip\enskip\enskip Greedily form $\pi$-series $s_\pi(x) = (X_i,X_{i_2}\ldots X_{i_N})$ starting from $X_i$ as its first \item[]\enskip\enskip\enskip element. 
      \enskip\enskip\enskip\hspace{5em} \For {each $\pi$-series $s_\pi(x)$ in step 3}\\ \enskip\enskip\enskip\enskip\enskip\enskip Find a corresponding smallest starting subset $\bf{T_x}$ with  $$M_\pi(\bf{T_x}) = \underset{ 1 \leq k \leq N-1}{\mathrm{max}} \pi(X_{i_{k+1}},\{X_{i_{1}},\ldots,X_{i_{k}}\})$$\EndFor \EndFor
    \State Among the non-coinciding minimal $\pi$-clusters $T_x$'s choose those that maximize $$ M_\pi(\bf{T_x})=\underset{X_i \in \mathbf{X}\setminus \mathbf{T_x}}{\text{min}}
  \pi(X_i,\mathbf{T_x}) $$
    \enskip\enskip\enskip all of which are the required minimal maximizers, and we return them as minimalMax\\

    \Return(minimalMax)
       \EndFunction

\end{algorithmic}
\end{algorithm}

\begin{theorem}
 The algorithm above finds all the minimal maximizers over $\mathcal{P}^{-\bf{X}}$.
\end{theorem}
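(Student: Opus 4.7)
The plan is to prove the theorem in two directions: completeness, that every inclusion-minimal maximizer of $M_\pi$ is returned by the algorithm, and soundness, that every set returned is an inclusion-minimal maximizer. The only results I would invoke are Theorem \ref{Theorem7.1}, Theorem \ref{thref1}, and the preceding proposition that the intersection of two overlapping maximizers is again a maximizer. From that proposition I first extract one repeatedly useful fact: an inclusion-minimal maximizer containing a fixed element $X_{i_1}$ is unique, since any two such would overlap at $X_{i_1}$, their intersection would again be a maximizer, and minimality would force them to coincide.

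For completeness, let $\mathbf{S}^*$ be any inclusion-minimal maximizer and pick any $X_{i_1}\in \mathbf{S}^*$. The outer loop of the algorithm forms some $\pi$-series $s_\pi$ starting at $X_{i_1}$, and the inner loop returns the smallest starting subset $\mathbf{T}_{X_{i_1}}$ of $s_\pi$ attaining the maximum value of $M_\pi$ along the series. Theorem \ref{Theorem7.1} applied to $\mathbf{S}^*$ and $s_\pi$ gives that $\mathbf{S}^*$ itself equals some starting subset $\mathbf{\overline{S}_k}$ of $s_\pi$, so $M_\pi(\mathbf{S}^*)$ is attained along $s_\pi$ and that value is the global maximum of $M_\pi$. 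Since starting subsets of $s_\pi$ are linearly ordered by inclusion and $\mathbf{T}_{X_{i_1}}$ is the smallest one achieving this max, $\mathbf{T}_{X_{i_1}}\subseteq \mathbf{S}^*$; but $\mathbf{T}_{X_{i_1}}$ is itself a maximizer and $\mathbf{S}^*$ is inclusion-minimal, so $\mathbf{T}_{X_{i_1}}=\mathbf{S}^*$. Every other candidate $\mathbf{T}_{X_j}$ satisfies $M_\pi(\mathbf{T}_{X_j})\leq$ global maximum, so the algorithm's final max-step keeps $\mathbf{S}^*$.

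For soundness, let $\mathbf{T}_{X_i}$ be any set the algorithm returns, coming from a $\pi$-series $s_\pi$ started at $X_i$. By completeness the maximum of $M_\pi$ over the candidates $\{\mathbf{T}_{X_j}\}$ equals the global maximum, so $M_\pi(\mathbf{T}_{X_i})$ is the global maximum. Theorem \ref{thref1} then says $\mathbf{T}_{X_i}$ is a union of inclusion-minimal maximizers, so there is a minimal maximizer $\mathbf{S}'$ with $X_i\in \mathbf{S}'\subseteq \mathbf{T}_{X_i}$. Applying Theorem \ref{Theorem7.1} to $\mathbf{S}'$ and the same $s_\pi$, $\mathbf{S}'$ equals some starting subset $\mathbf{\overline{S}_{k'}}$ of $s_\pi$ that also attains the max of $M_\pi$ along $s_\pi$. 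Since $\mathbf{T}_{X_i}$ is by construction the smallest such starting subset, $\mathbf{T}_{X_i}\subseteq \mathbf{S}'\subseteq \mathbf{T}_{X_i}$, so $\mathbf{T}_{X_i}=\mathbf{S}'$ is inclusion-minimal.

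The main obstacle I expect is not combinatorial but bookkeeping. Two subtle points need care. First, the greedy $\pi$-series is not canonical when ties occur among minimizers of $\pi(\cdot,\mathbf{\overline{S}_k})$, but Theorem \ref{Theorem7.1} holds for every $\pi$-series with a given first element, so any deterministic tie-breaking rule is fine. Second, the phrase ``smallest starting subset attaining the max of $M_\pi$ along $s_\pi$'' is a priori weaker than being globally inclusion-minimal; pinning the two together uses the linear-order structure of starting subsets together with the uniqueness-through-$X_{i_1}$ fact above. Once those points are handled, the argument is a direct synthesis of Theorem \ref{Theorem7.1}, Theorem \ref{thref1}, and the overlap proposition, with no fresh combinatorial construction required.
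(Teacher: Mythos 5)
Your proof is correct and rests on exactly the same ingredients as the paper's --- the ``in particular'' clause of Theorem \ref{Theorem7.1} and the decomposition of maximizers from Theorem \ref{thref1} --- so it is essentially the paper's argument, recast as a direct completeness-plus-soundness proof rather than the paper's proof by contradiction. If anything, yours is the more complete version: the paper only argues that no minimal maximizer is missed and asserts in one line that the returned sets are maximizers, whereas you also verify explicitly (via the nestedness of starting subsets and Theorem \ref{thref1}) that every returned set is inclusion-minimal.
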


\begin{proof}
 From Theorem \ref{thref1} it follows that each element of minimalMax is a maximizer of $M_\pi(\bf{S})$ over $\mathcal{P}^{-\bf{X}}$.
 Assume that there is a minimal maximizer $\bf{S}$ that does not belong to minimalMax, and let $X_{i} \in \bf{S}$. Then, according to Theorem \ref{Theorem7.1}, there exist $\pi$-series starting from $X_i$ and minimal $\pi$-cluster $T_x \subseteq \bf{S}$ containing $X_{i}$ with $M_\pi(\bf{T_x}) \geq M_\pi(\bf{S})$.
 Since $\bf{S}$ does not belong to minimalMax, and, according to Steps $5$ and $8$ of the algorithm, $T_x$ or some subset of $T_x$ belongs
 to minimalMax, there is a minimal maximizer strictly
 included in $\bf{S}$  which contradicts the minimality of $\bf{S}$.
\end{proof}

\section{Computational complexity}
When we have $n$ processors, then we can build each $\pi$-series (in step-3 of algorithm) in $\mathcal{O}(n^2g)$ on one processor (including step 5), and because we build them in parallel, steps 3-5 take $\mathcal{O}(n^2g)$ time. Finding the maximum in step 8 takes  $\mathcal{O}(\log{\log{n}})$ time on $n$ processors, under the CRCW (concurrent-read-concurrent-write) mode \cite{horowitz1978fundamentals,horiguchi1989parallel,valiant1975parallelism,krizanc1999survey}.
If we have $n^2$ processors, $n$ processors are used to build each $\pi$-series. To add one element to a series we have to find $\min$ between $n$ elements, that takes $\mathcal{O}(\log{\log{n}})$ on $n$ processors, so to build each pi-series takes 
$g*(\log{1}+\log{2}+\ldots+\log{n})=\mathcal{O}(gn\log{n})$, and to finish it we have to find $\max$ with $n^2$ processors which takes $\mathcal{O}(1)$ time. This gives us $\mathcal{O}(gnloglog n)$ complexity. If we have $n^3$ processors, then we can use $n^2$ processors to build each $\pi$-series. To add one element to a series we have to find $\min$ between $n$ elements which takes $\mathcal{O}(1)$ on $n^2$ processors. So to build each $\pi$-series takes $\mathcal{O}(gn)$ time, and to finish we have to find $\max$ with $n^3$ processors, that takes $\mathcal{O}(1)$ time. These are summarized in Tables 1 and 2.

\section{Maxi-min Diverse Variable Selection}

As an illustrating example, that we derive, we aim to find all the subsets that maximize the function $M_{\pi}(\mathbf{T})$ which result in the solutions which are diverse features in the context of statistics/machine learning as follows
\begin{equation}\label{minFeqn1}
   \underset{\mathbf{T} \subset \mathbf{X}} {\mathrm{arg\enskip max }} \enskip M_{\pi}(\mathbf{T}) = \underset{\mathbf{T} \subset \mathbf{X}} {\mathrm{arg\enskip max }} \enskip \underset{X_i \in \mathbf{X}\setminus \mathbf{T}}{\text{min}}
  \pi(X_i,\mathbf{T})
\end{equation}
For specificity, we use distance covariance upon normalization of the data as a measure of statistical dependence to model the diversity via $\pi(\mathbf{X_i,S})$ as defined in Lemma 8.1.
 \section{\textbf{Relevant Background on Distance Covariance and Distance Correlation}}
In this section we introduce some preliminaries about distance correlation and distance covariance and illustrate a connection between these functions and quasi-concave set function optimization. 
Distance Correlation \cite{szekely2007measuring} is a measure of nonlinear statistical dependencies between random vectors of arbitrary dimensions. We describe below distance covariance $\mathbb{\nu}^2(\mathbf{x},\mathbf{y})$ between random variables $\mathbf{x} \in \mathbb{R}^d$ and $\mathbf{y} \in \mathbb{R}^m$ with finite first moments is a non-negative number as
	\begin{equation}\label{charac}
		\mathbb{\nu}^2(\mathbf{x},\mathbf{y})=\int_{\mathbb{R}^{d+m}}|f_{\mathbf{x},\mathbf{y}}(t,s)-f_\mathbf{x}(t)f_\mathbf{y}(s)|^2 w(t,s)dtds
	\end{equation}

where $w(t, s)$ is a weight function as defined in \cite{szekely2007measuring}, $f_\mathbf{x},f_\mathbf{y}$ are characteristic functions of $\mathbf{x},\mathbf{y}$ and $f_{\mathbf{x},\mathbf{y}}$ is the joint characteristic function.\par The distance covariance is zero if and only if random variables $\mathbf{x}$ and $\mathbf{y}$ are independent. Using the above definition of distance covariance, we have the following expression for Distance Correlation \cite{szekely2007measuring}:\par
 The squared Distance Correlation between random variables $\mathbf{x} \in \mathbb{R}^d$ and $\mathbf{y} \in \mathbb{R}^m$ with finite first moments is a nonnegative number is defined as

\begin{equation} \rho^2(\mathbf{x},\mathbf{y})
	    =\left\{ \begin{array}{cc}
				 \frac{\mathbb{\nu}^2(\mathbf{x},\mathbf{y})}{\sqrt{\mathbb{\nu}^2(\mathbf{x},\mathbf{x})\mathbb{\nu}^2(\mathbf{y},\mathbf{y})}},
				 & \mathbb{\nu}^2(\mathbf{x},\mathbf{x})\mathbb{\nu}^2(\mathbf{y},\mathbf{y})>0.\\
	        0, & \mathbb{\nu}^2(\mathbf{x},\mathbf{x})\mathbb{\nu}^2(\mathbf{y},\mathbf{y})=0.
	        \end{array} \right.
	\end{equation}

The Distance Correlation defined above has the following interesting properties.

\begin{enumerate}
    \item ${\rho}^2(\mathbf{x},\mathbf{y})$	 is applicable for arbitrary dimensions $d$ and $m$ of $\mathbf{x}$ and $\mathbf{y}$ respectively.

    \item ${\rho}^2(\mathbf{x},\mathbf{y})=0$ if and only if $\mathbf{x}$ and $\mathbf{y}$ are independent.

    \item ${\rho}^2(\mathbf{x},\mathbf{y})$ satisfies the relation $0 \leq \rho^2(\mathbf{x},\mathbf{y}) \leq 1$.
\end{enumerate}
 \subsection{\textbf{Sample Distance Covariance and Sample Distance Correlation}}
 We provide the definition of sample version of distance covariance given samples $\{ (\mathbf{x}_k,\mathbf{y}_k) | k = 1,2,\ldots, n \}$ sampled i.i.d. from joint distribution of random vectors $\mathbf{x} \in \mathbb{R}^d$ and $\mathbf{y} \in \mathbb{R}^m$. To do so, we define two squared Euclidean distance matrices $\mathbf{E}_\mathbf{X}$ and $\mathbf{E}_\mathbf{Y}$,  where each entry $[\mathbf{E}_\mathbf{X}]_{k,l} = \NormS{\mathbf{x}_k-\mathbf{x}_l}$ and $[\mathbf{E}_\mathbf{Y}]_{k,l} = \NormS{\mathbf{y}_k-\mathbf{y}_l}$ with $k,l \in \{ 1,2,\ldots, n\}$. These squared distance matrices are double-centered by making their row and column sums zero and are denoted as $\widehat{\mathbf{E}}_{\mathbf{X}}, \widehat{\mathbf{Q}}_{\mathbf{X}}$, respectively. So given a double-centering matrix $\mathbf{J}=\mathbf{I}-\frac{1}{n}\mathbf{1}\mathbf{1}^T$, we have $\widehat{\mathbf{E}}_\mathbf{X}=\mathbf{J}\mathbf{E}_\mathbf{X}\mathbf{J}$ and $\widehat{\mathbf{E}}_\mathbf{Y}=\mathbf{J}\mathbf{E}_\mathbf{Y}\mathbf{J}$. The sample distance covariance and sample distance correlation can now be defined as follows.

\begin{definition}{\textbf{Sample Distance Covariance \cite{szekely2007measuring}:}}
Given i.i.d samples $\mathcal{X} \times \mathcal{Y} = \{ (\mathbf{x}_k,\mathbf{y}_k) | k = 1,2,3,\ldots, n\}$ and corresponding double centered Euclidean distance matrices $\widehat{\mathbf{E}}_\mathbf{X}$ and $\widehat{\mathbf{E}}_\mathbf{Y}$, the squared sample distance correlation is defined as,
\label{popDC}\[
    \hat{\mathbb{\nu}}^2(\mathbf{X},\mathbf{Y})=\frac{1}{n^2}\sum_{k,l=1}^{n}[\widehat{\mathbf{E}}_\mathbf{X}]_{k,l}[\widehat{\mathbf{E}}_\mathbf{Y}]_{k,l},
\]	
\end{definition}
Using this, sample distance correlation is given by
\label{sampleDC}
\[
	\hat{\rho}^2(\mathbf{X},\mathbf{Y})
	= \left\{ \begin{array}{cc}
    	 \frac{\mathbf{\hat{\nu}}^2(\mathbf{X},\mathbf{Y})}{\sqrt{\mathbf{\hat{\nu}}^2(\mathbf{X},\mathbf{X})\mathbf{\hat{\nu}}^2(\mathbf{Y},\mathbf{Y})}}, & \mathbf{\hat{\nu}}^2(\mathbf{X},\mathbf{X})\mathbf{\hat{\nu}}^2(\mathbf{Y},\mathbf{Y})>0. \\
    	0, & \mathbf{\hat{\nu}}^2(\mathbf{X},\mathbf{X})\mathbf{\hat{\nu}}^2(\mathbf{Y},\mathbf{Y})=0.
	\end{array}
	\right.
\]

 \textbf{Monotonicity of distance covariance under lack of independence:}
    If $\mathbf{X,Z} \in \mathbb{R}^p$ and $\mathbf{Y} \in \mathbb{R}^q$ and if $\mathbf{Z} \indep (\mathbf{X},\mathbf{Y})$ then \begin{equation}\nu^2(\mathbf{X}+ \mathbf{Z},\mathbf{Y}) \leq \nu^2(\mathbf{X},\mathbf{Y})\end{equation}
Note that $\indep$ indicates 'statistically independent' in statistical literature. 

\subsection{Motivating applications for modeling diversity with quasi-concave set function optimization}
A minor sampling of applications that benefit from the results in this paper do parallel traditional applications seen in submodular optimization literature. A few directions are listed below.
\begin{enumerate}
    \item Maximally/minimally correlated marginal selection for private data synthesis \cite{zhang2021privsyn}. 
    \item Modeling diversity in active learning \cite{wei2015submodularity}, determinantal point processes \cite{tschiatschek2016learning}.
    \item Diverse sample selection, feature selection and data summarization in machine learning and statistics. \cite{prasad2014submodular,das2012selecting}
\end{enumerate}

\begin{figure}
    \centering
    \includegraphics[scale=0.9]{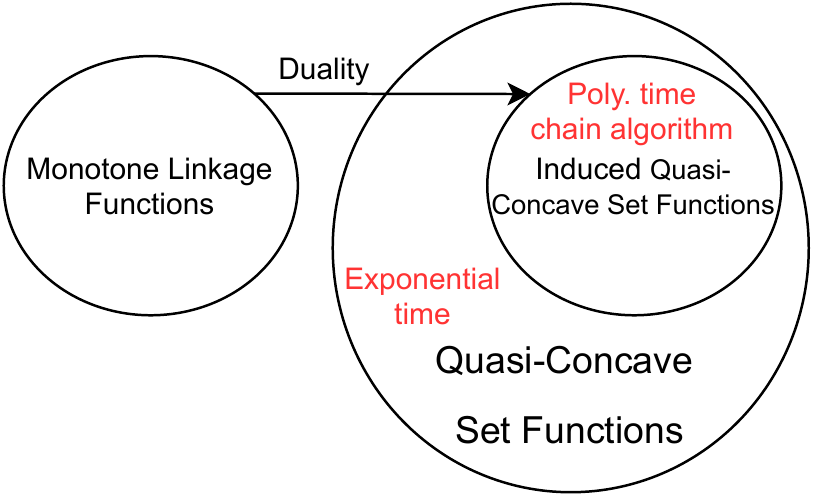}
    \caption{This illustration refers to the duality between monotone linkage functions and quasi-concave set functions. Optimization algorithms for general quqasi-concave set functions do not exist while those that are induced via monotone linkage functions can be optimized in polynomial time.}
    \label{fig:my_label}
\end{figure}

\subsection{\textbf{A monotone linkage function of distance covariance}}
\begin{lemma}
The function $\pi(X_i,\mathbf{S})$ of distance covariance defined on $X_i \notin \mathbf{S}$  as \begin{equation}\label{piEqn}
    \underset{X_i \notin \mathbf{S}}{\pi(X_i,\mathbf{S})} = \sum_{\mathbf{S}_j \in \mathbf{S}} -\nu^{2}(X_i,\mathbf{S}_j)
\end{equation}
 is a monotone linkage function. \end{lemma}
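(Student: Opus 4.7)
The plan is to verify the monotone linkage inequality $\pi(X_i,\mathbf{S}) \geq \pi(X_i,\mathbf{T})$ directly from the definition whenever $\mathbf{S} \subseteq \mathbf{T}$ and $X_i \notin \mathbf{T}$. The strategy is to split the sum defining $\pi(X_i,\mathbf{T})$ into the part coming from elements of $\mathbf{S}$ and the part coming from the extra elements in $\mathbf{T}\setminus\mathbf{S}$, and then show that the extra terms can only decrease the value.

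First I would write
\[
\pi(X_i,\mathbf{T}) \;=\; \sum_{\mathbf{S}_j \in \mathbf{S}} -\nu^{2}(X_i,\mathbf{S}_j) \;+\; \sum_{\mathbf{T}_k \in \mathbf{T}\setminus \mathbf{S}} -\nu^{2}(X_i,\mathbf{T}_k),
\]
which uses only the assumption that $\mathbf{S} \subseteq \mathbf{T}$ so that the sum over $\mathbf{T}$ decomposes disjointly. The first summand on the right is, by definition, exactly $\pi(X_i,\mathbf{S})$.

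Next I would invoke the fact that squared distance covariance is non-negative, i.e.\ $\nu^{2}(X_i,\mathbf{T}_k) \geq 0$ for every $\mathbf{T}_k$. This is a basic property recalled right before the lemma (Section 8 states that $\nu^2(\mathbf{x},\mathbf{y})$ is a non-negative number, from its representation as the integral of a non-negative integrand in equation~(\ref{charac})). Consequently each term $-\nu^{2}(X_i,\mathbf{T}_k)$ in the second sum is $\leq 0$, so
\[
\pi(X_i,\mathbf{T}) \;=\; \pi(X_i,\mathbf{S}) + \sum_{\mathbf{T}_k \in \mathbf{T}\setminus \mathbf{S}} -\nu^{2}(X_i,\mathbf{T}_k) \;\leq\; \pi(X_i,\mathbf{S}),
\]
which is precisely the monotone linkage condition.

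There is no real obstacle here: the result is essentially a one-line consequence of non-negativity of $\nu^{2}$ combined with the additive structure of $\pi$ over the conditioning set. The only subtlety worth flagging is to confirm that the decomposition over $\mathbf{S}$ and $\mathbf{T}\setminus\mathbf{S}$ is well-defined, i.e.\ that $\pi$ is defined at both $(X_i,\mathbf{S})$ and $(X_i,\mathbf{T})$, which is guaranteed by the hypothesis $X_i \notin \mathbf{T} \supseteq \mathbf{S}$. I would not need the stronger monotonicity property (equation~(9)) of distance covariance under independent perturbations; that property is about a different kind of monotonicity (in the first argument), whereas the present lemma only needs monotonicity of a non-positive cumulative sum in the set argument.
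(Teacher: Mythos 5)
Your proposal is correct and matches the paper's own proof essentially verbatim: both decompose $\pi(X_i,\mathbf{T})$ into the sum over $\mathbf{S}$ plus the sum over $\mathbf{T}\setminus\mathbf{S}$ and then invoke non-negativity of $\nu^{2}$ to show the extra terms are non-positive. Your remark that the stronger monotonicity property under independent perturbations is not needed is a correct observation, and your version is if anything slightly more carefully stated than the paper's.
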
\begin{Proof} For $\mathbf{S} \subseteq \mathbf{T}$ we have \begin{align}\underset{X_i\notin \mathbf{T}}{\pi(X_i,\mathbf{T})}&=\sum_{\mathbf{S}_j \in \mathbf{S}} -\nu_{i}^{2}(X_i,\mathbf{S}_j) -\sum_{\mathbf{T}_j \in \mathbf{T \setminus S}} \nu_{i}^{2}(X_i,\mathbf{T}_j) \\ &\leq
 \underset{X_i\notin \mathbf{T}  }
 {\pi(X_i,\mathbf{S})}=\sum_{\mathbf{S}_j \in \mathbf{S}} -\nu_{i}^{2}(X_i,\mathbf{S}_j)\end{align}We would also like to note that as $\nu(\cdot)$ is a non-negative function the above inequality does hold true.
 \end{Proof} $\\$By Assertion 1 from \cite{kempner1997monotone}, we conclude that
the function $M_{\pi}(\mathbf{T}) = \underset{X_i \in \mathbf{X}\setminus \mathbf{T}}{\text{min}}
  \pi(X_i,\mathbf{T})$ is a quasi-concave set function.


\begin{theorem}[Quasi-Concave Distance Covariance Set Function Theorem]\label{thm:mvt}


 If we have $\mathbf{S}\cap \mathbf{T} \neq \varnothing \text{ and } \forall \mathbf{S}, \mathbf{T}, \mathbf{Y} \text{ if } \nu^2(\mathbf{S}, \mathbf{T}) > 0 \land \nu^2(\mathbf{S}, \mathbf{Y}) > 0 \land \nu^2(\mathbf{T}, \mathbf{Y}) > 0  \text{ then, we have }$ \begin{equation}-\nu^2(\mathbf{S} \cap \mathbf{T}, \mathbf{Y}) \geq min(-\nu^2(\mathbf{S},\mathbf{Y}),-\nu^2(\mathbf{T},\mathbf{Y}))\end{equation}
\end{theorem}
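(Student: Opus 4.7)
The plan is to reduce the stated quasi-concavity inequality to the monotonicity property of distance covariance recorded in equation~(12) of the paper, combined with the singleton-indexed additive decomposition of $\nu^2(\cdot,\mathbf{Y})$ that is implicit in Lemma 8.1. Without loss of generality, I would first assume $\nu^2(\mathbf{S},\mathbf{Y}) \leq \nu^2(\mathbf{T},\mathbf{Y})$ (the other case is symmetric under swapping the roles of $\mathbf{S}$ and $\mathbf{T}$), so the minimum on the right-hand side collapses to $-\nu^2(\mathbf{T},\mathbf{Y})$, and the theorem reduces to the single inequality $\nu^2(\mathbf{S}\cap\mathbf{T},\mathbf{Y}) \leq \nu^2(\mathbf{T},\mathbf{Y})$.

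Next I would write $\mathbf{T}$ as the disjoint union $(\mathbf{S}\cap\mathbf{T})\sqcup(\mathbf{T}\setminus\mathbf{S})$ and split $\nu^2(\mathbf{T},\mathbf{Y})$ into a ``shared core'' contribution from $\mathbf{S}\cap\mathbf{T}$ and a ``residual'' contribution from $\mathbf{T}\setminus\mathbf{S}$, following the same singleton-indexed aggregation used in the definition of $\pi(X_i,\mathbf{S})$ in Lemma 8.1. Non-negativity of $\nu^2$ on each summand would then yield $\nu^2(\mathbf{S}\cap\mathbf{T},\mathbf{Y}) \leq \nu^2(\mathbf{T},\mathbf{Y})$, since passing from $\mathbf{T}$ down to $\mathbf{S}\cap\mathbf{T}$ only drops non-negative terms. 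The positivity hypotheses $\nu^2(\mathbf{S},\mathbf{T})>0$, $\nu^2(\mathbf{S},\mathbf{Y})>0$, and $\nu^2(\mathbf{T},\mathbf{Y})>0$ enter here to exclude the degenerate independence regimes in which the inequality is vacuous and the monotonicity tool of equation~(12) carries no information.

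The main obstacle I expect is the rigorous justification for treating $\mathbf{T}\setminus\mathbf{S}$ as an additively separable contribution. Equation~(12) is stated under the strong hypothesis that the added component is \emph{jointly independent} of both the base random vector and of $\mathbf{Y}$, whereas the theorem only assumes strictly positive pairwise distance covariances among $\mathbf{S}$, $\mathbf{T}$, and $\mathbf{Y}$, i.e.\ the opposite regime. Bridging this gap will likely require a conditioning argument that isolates, inside $\mathbf{T}\setminus\mathbf{S}$, a component independent of $\mathbf{S}\cap\mathbf{T}$ and $\mathbf{Y}$, and absorbing the dependent remainder into the shared-core contribution via the monotone linkage structure already established for $\pi(X_i,\mathbf{S})$. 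Once this decomposition is justified, the conclusion $-\nu^2(\mathbf{S}\cap\mathbf{T},\mathbf{Y}) \geq \min(-\nu^2(\mathbf{S},\mathbf{Y}),-\nu^2(\mathbf{T},\mathbf{Y}))$ follows immediately from non-negativity of $\nu^2$ combined with the WLOG reduction in the first step.
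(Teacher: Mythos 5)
Your opening reduction is sound and lands exactly where the paper's own proof operates: since $\mathbf{S}\cap\mathbf{T}\subseteq\mathbf{S}$ and $\mathbf{S}\cap\mathbf{T}\subseteq\mathbf{T}$, everything hinges on a single containment inequality, namely $\nu^2(\mathbf{A},\mathbf{Y})\leq\nu^2(\mathbf{B},\mathbf{Y})$ whenever $\mathbf{A}\subseteq\mathbf{B}$. The paper simply invokes this as Kosorok's distance covariance inequality, applies it to both containments of the intersection, and concludes; your WLOG on which of $-\nu^2(\mathbf{S},\mathbf{Y})$ and $-\nu^2(\mathbf{T},\mathbf{Y})$ attains the minimum is harmless but not needed, since the intersection is dominated by \emph{both} sets.

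The genuine gap is in how you propose to establish that containment inequality. Distance covariance does not admit the singleton-indexed additive decomposition your argument relies on: $\nu^2(\mathbf{T},\mathbf{Y})$ is defined through the joint characteristic function of the entire block $\mathbf{T}$ and is not a sum of contributions $\nu^2(T_j,\mathbf{Y})$ over the elements $T_j\in\mathbf{T}$, so writing $\mathbf{T}=(\mathbf{S}\cap\mathbf{T})\sqcup(\mathbf{T}\setminus\mathbf{S})$ does not split $\nu^2(\mathbf{T},\mathbf{Y})$ into a ``core'' term plus a non-negative ``residual.'' The sum over singletons in Lemma 8.1 is the \emph{definition} of the linkage function $\pi$, not a property of $\nu^2$ of a joint vector, so it cannot be imported here. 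You correctly identify this as the main obstacle, but the proposed bridge --- a conditioning argument extracting from $\mathbf{T}\setminus\mathbf{S}$ a component jointly independent of $\mathbf{S}\cap\mathbf{T}$ and $\mathbf{Y}$ --- is not available in general (a dependent block admits no such decomposition), and equation (12) of the paper in any case concerns the regime where the adjoined component is independent and states that adjoining it can only \emph{decrease} $\nu^2(\cdot,\mathbf{Y})$, which is not the monotonicity you need. The missing ingredient is precisely the monotonicity of $\nu^2(\cdot,\mathbf{Y})$ under set containment, which the paper treats as a cited external fact rather than something derived from additivity; as written, your argument needs that fact as an input and therefore does not close.
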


\begin{proof}\cite{vepakomma2019diverse}


If $\mathbf{S} \cap \mathbf{T} = \mathbf{S}$ then since $\mathbf{S} \subseteq \mathbf{T}$


the Kosorok's distance covariance inequality simplifies to give \begin{equation}-\nu^2(\mathbf{S},\mathbf{Y}) \geq -\nu^2(\mathbf{T},\mathbf{Y})\end{equation}  Therefore, we have
\begin{equation}-\nu^2(\mathbf{S} \cap \mathbf{T}, \mathbf{Y}) \geq min(-\nu^2(\mathbf{S},\mathbf{Y}),-\nu^2(\mathbf{T},\mathbf{Y}))\nonumber\end{equation}
Similarly, if $\mathbf{S} \cap \mathbf{T} = \mathbf{T}$, then since $\mathbf{T} \subseteq \mathbf{S}$
 \begin{equation}-\nu^2(\mathbf{T},\mathbf{Y}) \geq -\nu^2(\mathbf{S},\mathbf{Y})\end{equation}and therefore,
\begin{equation}-\nu^2(\mathbf{S} \cap \mathbf{T}, \mathbf{Y}) \geq min(-\nu^2(\mathbf{S},\mathbf{Y}),-\nu^2(\mathbf{T},\mathbf{Y}))\end{equation}

 In the cases of ${\mathbf{S} \cap \mathbf{T}} \subset {\mathbf{S}}$ and  ${\mathbf{S} \cap \mathbf{T}} \subset {\mathbf{T}}$
the Kosorok's distance covariance inequality gives
\begin{equation}-\nu^2(\mathbf{S} \cap \mathbf{T}, \mathbf{Y}) > -\nu^2(\mathbf{S},\mathbf{Y})\end{equation} and
\begin{equation}-\nu^2(\mathbf{S} \cap \mathbf{T}, \mathbf{Y}) > -\nu^2(\mathbf{T},\mathbf{Y})\end{equation}
Thus, \begin{equation}-\nu^2(\mathbf{S} \cap \mathbf{T}, \mathbf{Y}) \geq min(-\nu^2(\mathbf{S},\mathbf{Y}),-\nu^2(\mathbf{T},\mathbf{Y}))\end{equation}

\end{proof}

\section{Conclusion} We showed that Algorithm 1 gives globally exact solutions that to the induced quasi-concave set function optimization and is highly parallelizable. This opens doors to a wide variety of real world applications that we would like to pursue as part of future work.
\nocite{langley00}

\bibliography{example_paper}
\bibliographystyle{icml2021}



\end{document}